\newtheorem {thm}   {Theorem}
\newtheorem {lem}      [thm]    {Lemma}
\newtheorem {prp}[thm]  {Proposition}
\newcounter{AbcT}
\renewcommand{\a}{\alpha}
\newcommand{\e}{\varepsilon}
\newcommand{\g}{\gamma}
\newcommand{\R}{{\bf R}}
\newcommand{\Z}{{\bf Z}}
\newcommand{\C}{{\bf C}}
\newcommand {\cP} {{\mathcal P}}
\newcommand {\cQ} {{\mathcal Q}}
\newcommand{\be}{\begin{equation}}
\newcommand{\ee}{\end{equation}}
\newcommand{\bea}{\begin{eqnarray}}
\newcommand{\eea}{\end{eqnarray}}
\newcommand{\bean}{\begin{eqnarray*}}
\newcommand{\eean}{\end{eqnarray*}}
\newcommand{\mat}[4]
{\left(
\begin{array}{cc}
#1 & #2 \\
#3 & #4
\end{array}
\right)}
\newcommand{\nri}[1]{\lfloor #1 \rceil}
\newcommand{\igap}{\:}
\DeclareMathOperator{\SL}{SL}
\DeclareMathOperator{\SO}{SO}
\DeclareMathOperator{\SU}{SU}
\title[Diophantine property]%
{Diophantine property in the group of affine transformations of the line}
\author{P\'eter P. Varj\'u}
\thanks{
I gratefully acknowledge the support
of the Simons Foundation and the European Research Council
(Advanced Research Grant 267259)%
}
\date{\today}
\keywords{solvable Lie groups, Diophantine property, roots of polynomials, zeros of polynomials}
\subjclass[2010]{22E25, 30C15, 11C08}
\begin{document}

\begin{abstract}
We investigate the Diophantine property of a pair of elements in the
group of affine transformations of the line.
We say that a pair of elements $\g_1,\g_2$ in this group is Diophantine if there is a 
number $A$ such that a
product of length $l$ of elements of the set $\{\g_1,\g_2,\g_1^{-1},\g_2^{-1}\}$ is
either the unit element or of distance at least $A^{-l}$ from the unit element.
We prove that the set of non-Diophantine pairs in a certain one parameter family
is of Hausdorff dimension 0.
\end{abstract}

\maketitle

\section{Introduction}
\label{section:introduction}

Let $G$ be a group endowed with a distance $d(\cdot,\cdot)$.
At this point, we make no assumption on $d(\cdot,\cdot)$, but later we
will specialize to concrete groups which come with natural metrics.
We denote by 1 the unit element of $G$ (as well as the unit element of any other multiplicative group).

Let $\g_1,\g_2\in G$ be two arbitrary elements and write $S=\{\g_1,\g_2,\g_1^{-1},\g_2^{-1}\}$.
We denote the set of words of length at most $l$ over $S$ by
\[
W_l:=\{g\in G:{\;\rm there\: is\;} s_1,\ldots, s_l\in S {\; \rm such\: that\;} g=s_1\cdots s_l\}.
\]

In this paper we study the behavior of 
\[
d_l:=\min_{g\in W_l:g\neq1}d(g,1)
\]
as $l$ increases.
This, off course, depends on the properties of $G$ and the generators.
Following Gamburd, Jakobson and Sarnak \cite{GJS-SU2}, we say that the pair 
$\g_1,\g_2$ is Diophantine,
if there is a number $\beta>0$ independent of $l$ such that
\[
d_l\ge|W_l|^{-\beta}.
\] 

Suppose that $G$ is isomorphic and isometric  to $(\R,+)$ and $\g_1,\g_2$ is mapped to the real numbers $x,y$
by this isomorphism.
Then the behavior of $d_l$ depends on the distance of $x/y$ to rational numbers of a given
denominator.
This is a classical topic and much is known about it.
It was proved by Khintchine \cite{Khintchine-diophantine}
that Lebesgue almost every generators, $d_l\ge Cl^{-1-\e}$ for every $\e$
with some constant depending on $\e$ and $x,y$.
We will outline a proof for this fact  later.
In addition, when $x/y$ is an algebraic number then the same inequality holds.  
This latter claim is equivalent to a deep result of Roth \cite{Roth-diophantine}.
Thus, almost every pairs
$\g_1,\g_2\in G\cong(\R,+)$
are Diophantine and so are those that are mapped to algebraic numbers.

On the other hand, for a generic pair $\g_1,\g_2$ in the sense of Baire category,
we have $d_l\le f(l)$ infinitely often with an arbitrarily fast decreasing  positive function $f$.
Therefore, generic pairs in the sense of Baire category are not Diophantine.

Now we turn to non-commutative groups.
Gamburd, Jakobson and Sarnak \cite{GJS-SU2} noticed that every algebraic  $\g_1,\g_2\in G=\SU(2)$
are Diophantine, and also that this is not the case for generic pairs in the sense of Baire category.
They posed the problem (see \cite[Problem (4) at the end of Section 1]{GJS-SU2})
whether almost every pair  possesses the Diophantine property.
This problem is of great interest due to a result of Bourgain and Gamburd \cite{BG-SU2}.
This asserts that the random walk on $\SU(2)$ generated by $\g_1,\g_2$ is mixing exponentially fast,
if the pair $\g_1,\g_2$ is Diophantine.
See also their paper \cite{BG-SUd-arxiv}, where the result is extended to $\SU(d)$.

Kaloshin and Rodnianski \cite[Theorem 1]{KR-diophantine}
proved that for almost every pair  $\g_1,\g_2\in G=\SO(3)$, there is a number
$D>1$ such that $d_l\ge D^{-l^2}$.
This bound is weaker than what is required in the definition of Diophantine property.

The problem in nilpotent Lie groups is studied by Aka, Breuillard,  Rosenzweig and Saxc\'e
\cite{ABRS-Diophantine-Nilpotent}.
They prove that almost every tuple is Diophantine in a nilpotent group defined over the rationals
\cite[Corollary 1.2]{ABRS-Diophantine-Nilpotent}.
Moreover, for nilpotent groups of step at most 5, they prove this without the rationality assumption.
On the other hand, they exhibit a nilpotent Lie group of step 6 in which almost every $k$-tuple is
not Diophantine for $k\ge3$ \cite[Theorem 1.3]{ABRS-Diophantine-Nilpotent}.
For almost every pair of generators, the Diophantine property holds in groups of step 6, as well,
and a counterexample exists among the groups of step 7.

The purpose of the paper is to study the problem of Diophantine property in a solvable Lie group.
More specifically, we consider the group:
\be\label{equation:G}
G=\left\{\mat ab01:a,b\in\C\right\}.
\ee
Observe that the line $\{(x,1)^t:x\in \C\}$ is invariant under the action of $G$, and
\[
\mat ab01
\left(\begin{array}{c} x \\ 1 \end{array} \right)
= \left(\begin{array}{c} ax+b \\ 1 \end{array} \right),
\]
hence $G$ is isomorphic to the group of affine transformations of the (complex) line.

We further restrict generality and consider pairs only of the form
\be\label{equation:generators}
\g_1=\mat x001,\quad \g_2=\mat 1101.
\ee
This is indeed a significant loss of generality, but we note that an arbitrary pair
can be conjugated into this form, if one of them is unipotent, i.e
all of its eigenvalues are 1.

The result of the paper is the following:
\begin{thm}\label{theorem:main}
The set of complex numbers $|x|>1$ such that the pair \eqref{equation:generators}
is not Diophantine is of Hausdorff dimension 0.
\end{thm}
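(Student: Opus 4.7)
The plan is to translate the Diophantine condition into a smallness statement for integer Laurent polynomials evaluated at $x$, and then bound the Hausdorff dimension of the exceptional set by a covering/Borel--Cantelli argument.

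A direct induction using $\begin{pmatrix}a&b\\0&1\end{pmatrix}\begin{pmatrix}c&d\\0&1\end{pmatrix}=\begin{pmatrix}ac&ad+b\\0&1\end{pmatrix}$ shows that every word $w$ of length $l$ in $\{\g_1^{\pm 1},\g_2^{\pm 1}\}$ equals $\begin{pmatrix}x^{n(w)}&P_w(x)\\0&1\end{pmatrix}$, where $n(w)\in\Z$ is the signed sum of $\g_1$-exponents and $P_w(x)=\sum_{i=1}^k\e_i x^{e_i}$ is a Laurent polynomial with $\e_i\in\{\pm 1\}$, $|e_i|\le l$, and $k\le l$. For $|x|>1$ and $n(w)\ne 0$ one has $|x^{n(w)}-1|\ge |x|-1$, a positive constant independent of $l$, so the Diophantine question reduces to asking whether $|P_w(x)|\ge C^{-l}$ holds for every word of length $l$ with $n(w)=0$ and $P_w\ne 0$. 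Thus the non-Diophantine set is $\bigcap_{C\ge 1}\bigcup_l E_l(C)$, where $E_l(C):=\bigcup_{P\in\cP_l}\{x:|P(x)|<C^{-l}\}$ and $\cP_l$ is the family of nonzero such $P_w$'s, satisfying $|\cP_l|\le 4^l$.

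Fix $s>0$ and work in an annulus $A_R=\{1<|x|<R\}$ (the final union over $R\in\N$ is countable). I would aim to show $\sum_l H^s(E_l(C))<\infty$ for some $C=C(s,R)$; a Borel--Cantelli argument then gives $H^s(\limsup_l E_l(C))=0$, and since $s$ is arbitrary, the exceptional set in $A_R$ has Hausdorff dimension zero. For each $P\in\cP_l$, write $P(x)=x^{-e_-}Q(x)$ with $Q$ an ordinary polynomial of degree $\D$ and nonzero integer leading coefficient. Cartan's lemma then covers $\{x\in A_R:|P(x)|<C^{-l}\}$ by at most $\D$ disks whose diameters sum to $\le c_0(C^{-l}R^l)^{1/\D}$; by concavity of $t\mapsto t^s$, this contributes $s$-content at most $\D^{1-s}(c_0 R)^s C^{-sl/\D}$ per polynomial.

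Summing over $\cP_l$ is the main technical issue. The crude bound $|\cP_l|\le 4^l$ combined with worst-case $\D\sim l$ yields only $4^l\cdot l\cdot C^{-O(s)}$, which diverges for any fixed $C$. To close the estimate I would reorganize $\cP_l$ by the structural type $(k,\D)$ (number of nonzero terms and spread of exponents), using the combinatorial constraint $l\ge k+2\D$ enforced by the $\g_1$-excursion cost and the count $\le\binom{\D+1}{k}2^k$ of polynomials of given type up to shift; one hopes for cancellation between the small-$\D$ regime (where the Cartan factor is sharp) and the large-$\D$ regime (where $k\le l-2\D$ is small).

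The hard part is the intermediate-$\D$ regime. If direct combinatorial balancing fails, my fallback is an iterative pigeonhole step exploiting that if $w_1,w_2$ are admissible words of length $\le l$ with $n=0$, then $w_1w_2^{-1}$ is an admissible word of length $\le 2l$ with $(1,2)$-entry $P_{w_1}(x)-P_{w_2}(x)$. If $N$ polynomials of $\cP_l$ are simultaneously less than $\e$ at $x$, pigeonholing their values in a disk of radius $\e$ produces two whose difference is $O(\e/\sqrt N)$, yielding an element of $\cP_{2l}$ substantially smaller at $x$. Iterating this self-improving reduction together with the covering estimate should force the exceptional $x$ into a set whose $s$-dimensional content tends to zero; the key difficulty is controlling the interplay between the growth of word length and the gain in smallness so that the iteration converges.
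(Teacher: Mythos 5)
Your reduction of the problem to lower bounds on $|P_w(x)|$ for integer combinations of powers of $x$ is correct and matches the paper's Lemma \ref{lemma:description}, and you have correctly located the central difficulty: there are exponentially many admissible polynomials, while a per-polynomial covering bound (Cartan) only gives disks of radius $\approx C^{-l/\Delta}$, which is a constant when the degree spread $\Delta$ is comparable to $l$. But neither of your two proposed ways out closes this gap, and you acknowledge as much ("one hopes", "should force"). The $(k,\Delta)$-balancing fails concretely in the regime $\Delta\sim l/2$, $k\sim\varepsilon l$ (allowed by $k+2\Delta\le l$): the Cartan radius is then $C^{-l/\Delta}\approx C^{-2}$, a constant, while the number of polynomials of that type is at least of order $\binom{\Delta}{k}$, which grows faster than any fixed exponential beats; the point is that a worst-case per-polynomial bound cannot exploit the fact that \emph{most} of these polynomials have no clustered roots, so the sum over $l$ diverges for every fixed $C$ and the Borel--Cantelli step never starts. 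The pigeonhole fallback gains only a factor $\sqrt N\le 2^{l/2}$ in smallness at the price of doubling the word length, whereas the Diophantine threshold itself squares ($C^{-l}\mapsto C^{-2l}$) under doubling; you give no mechanism converting this into a summable covering estimate, so it remains a heuristic.

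The missing idea, which is the actual content of the paper, is to \emph{count the exceptional polynomials} rather than to improve the per-polynomial bound: for each cluster size $k$ one bounds the number of $P\in\cP_l$ whose sublevel set $\{|P|<A^{-l}\}$ inside an annulus $1+r\le|x|\le r^{-1}$ cannot be covered by $2l$ disks of radius $2^{-al/k}$, showing there are at most $C\cdot 10^{l/k}$ of them (Proposition \ref{proposition}). This rests on two ingredients you do not have: (i) a Jensen-formula bound saying an integer polynomial has at most $C_r(\log\max_i|a_i|+1)$ roots of modulus $>1+r/2$, independently of the degree; and (ii) a separation argument: if two distinct elements of $\cP_l$ are both $\le B^{-l}$ on a region of diameter $2^{-l/k}$ of the annulus, their difference has $\gtrsim k$ roots near that region, hence by (i) a coefficient $>e^{10k}$, so the coefficient vectors of such polynomials are $e^{10k}$-separated in $\ell^\infty$ inside an $\ell^1$-ball of radius $l$ and can be counted. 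The count $10^{l/k}$ then balances against the covering radius $2^{-al/(k+1)}$ once $a$ is chosen large, giving convergence for every Hausdorff exponent $\alpha>0$. Note also that this machinery forces you to stay away from the unit circle: the constant $C_r$ blows up as $r\to0$ (polynomials like $x^m-1$ have arbitrarily many roots on the circle), so your annulus $1<|x|<R$ does not admit uniform estimates; the paper works on $1+r\le|x|\le r^{-1}$ and only at the very end takes a countable union over $r\to0$.
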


Since \eqref{equation:generators} generates the same group if we replace $x$ by $x^{-1}$,
the theorem implies the same conclusion for parameters  $|x|<1$ .
On the other hand we do not claim anything for parameters on the unit circle.

\subsection*{Acknowledgment}
I am greatly indebted to Menny Aka and Emmanuel Breuillard for communicating the problem to me.
I thank them and Elon Lindenstrauss, Lior Rosenzweig, and Nicolas de Saxc\'e for stimulating
discussions about this project.

\section{Outline}
\label{section:outline}

Before discussing the proof of Theorem \ref{theorem:main}, we briefly review
the Abelian case.
This argument is folklore.
Assume that $G$ is isomorphic and isometric to $(\R,+)$ and the pair $\g_1,\g_2$ is mapped into $x,y$.
In fact, we can choose the isomorphism in such a way that $|x|<y=1$.
This changes the metric on $G$, but only by a constant factor.

Observe that any element of $W_l$ is of the form $mx+n$, where $m,n\in\Z$ and $|m|+|n|\le l$.
Fix a number $A>0$.
If $|mx+n|\le l^{-A}$, then $|x+n/m|\le m^{-1}l^{-A}\le m^{-(1+A)}$.
Thus if $d_l\le l^{-A}$ for infinitely many $l$, then $x$ belongs to the set
\[
\Omega_A:=\bigcap_{k=1}^\infty\bigcup_{m=k}^{\infty}\bigcup_{n=-m}^{m}
\left[\frac{n}{m}-m^{-(1+A)},\frac{n}{m}+m^{-(1+A)}\right],
\]
where $[a,b]\subset\R$ denotes the closed interval between $a$ and $b$.

Let $\a>0$ be a number.
Then the $\a$-dimensional Hausdorff measure of $\Omega_A$ is at most
\[
\lim_{k\to\infty}\sum_{m=k}^{\infty}(2m+1)\left(2m^{-(1+A)}\right)^\a=0,
\]
if $\a>2/(1+A)$.
Thus the Hausdorff dimension of $\Omega_A$ is at most $2/(1+A)$.
This shows that the set of pairs of generators $\g_1,\g_2\in G\cong(\R,+)$
that are not Diophantine is of Hausdorff dimension 0.

Our proof of Theorem \ref{theorem:main} follows similar lines.
First, we need to understand how elements of $W_l$ look like.
This is achieved in the next lemma.

\begin{lem}\label{lemma:description}
Let $G$ be as in \eqref{equation:G} and $\g_1,\g_2\in G$ of the form \eqref{equation:generators}.
Then any element of $W_l$ is of the form
\be\label{equation:Wl}
\mat{x^k}{x^{-n}P(x)}{0}{1},
\ee
where $-l\le k\le l$ and $0\le n \le l$ are integers and $P(x)=a_0+a_1x+\ldots +a_mx^m$ is a polynomial
with integer coefficients of degree at most $l+n$ and $\sum_{i=0}^m|a_i|\le l$. 
\end{lem}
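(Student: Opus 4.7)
The plan is to prove the lemma by induction on $l$. For the base case $l = 0$, $W_0 = \{1\}$ and the identity matrix has the desired form with $k = 0$, $n = 0$, and $P \equiv 0$. For the inductive step, I write $g \in W_l$ as $g = h s$ with $h \in W_{l-1}$ and $s \in S$; note that if $g \in W_{l-1}$ itself, the parameters valid at level $l-1$ remain valid at level $l$. By the inductive hypothesis,
\[
h = \mat{x^{k'}}{x^{-n'}P'(x)}{0}{1}
\]
with $|k'| \le l-1$, $0 \le n' \le l-1$, $\deg P' \le (l-1) + n'$, and the sum of the absolute values of the coefficients of $P'$ at most $l-1$.

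Direct matrix multiplication on the right then gives two cases. If $s = \g_1^{\pm 1}$, only the $(1,1)$ entry changes, to $x^{k' \pm 1}$, and all constraints hold at level $l$ with parameters $(k' \pm 1, n', P')$. If $s = \g_2^{\pm 1}$, the upper-right entry becomes $x^{-n'} P'(x) \pm x^{k'}$; to recast this in the required form I set $n := \max(n', -k')$, so that both $n - n' \ge 0$ and $k' + n \ge 0$, and define
\[
P(x) := x^{n - n'} P'(x) \pm x^{k' + n},
\]
a polynomial with integer coefficients. The bounds are then routine: $n \le l - 1 \le l$; $\deg P \le \max((n - n') + \deg P', \, k' + n) \le (l-1) + n \le l + n$; and the sum of absolute coefficients of $P$ exceeds that of $P'$ by at most one, so is at most $l$. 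The only point requiring care is the definition of $n$: when $k' < -n'$ we must enlarge $n$ beyond $n'$ in order to keep $x^{k' + n}$ a nonnegative power of $x$. Apart from this bookkeeping, the argument is a straightforward induction, and there is no significant obstacle.
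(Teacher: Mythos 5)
Your proof is correct and follows essentially the same route as the paper: induction on the word length, tracking how multiplication by each generator transforms the matrix entries (the paper's one-line sketch records exactly the two effects you compute, changing $k$ by $\pm1$ or adding $\pm x^{k}$ to the top-right entry). Your choice $n=\max(n',-k')$ simply supplies the bookkeeping the paper leaves implicit, so there is no substantive difference.
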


Although elements of form \eqref{equation:Wl} are not all contained in $W_l$, they are easily seen to be
contained in $W_{10l}$.
This implies, in particular, that $|W_l|$ grows exponentially.

\begin{proof}
The proof is by induction.
The statement is easy for $l=0$.
Multiplying an element of the form \eqref{equation:Wl} by a generator from the left, we either
increase or decrease the value of $k$ by 1, or add $\pm x^k$ to $x^{-n}P(x)$.
\end{proof}

If an element of $W_l$ differs from the unit element in the top left entry, then its distance
{}from 1 is at least
$1-|x|^{-1}$ (recall that $|x|>1$), which is a bound independent of $l$.
Hence it is enough to consider elements with $k=0$ and hence $P(x)\not\equiv 0$ in \eqref{equation:Wl}.
To implement an argument similar to the Abelian case, we need to understand the set, where a polynomial
satisfying the properties in Lemma \ref{lemma:description} is small.
This was off course simpler in the Abelian case, since there we only needed to consider
polynomials of degree 1.

In a semi-simple group like $\SU(2)$ or $\SL_2(\R)$, the elements of $W_l$ can be described in a similar fashion,
but the polynomials which occur can have exponentially large coefficients.
Then the problem becomes even more difficult.
This was considered by Kaloshin and Rodnianski \cite{KR-diophantine}, and they gave
a bound  of the form $d_l\ge A^{-l^2}$.
The reason why we are able to give a stronger bound in the case we consider is that we have
a better estimate on the coefficients and this allows us to control the distance between the roots.

We write
\[
\cP_l:=\{P(x)=\sum_{i=0}^{2l}a_ix^i:a_i\in\Z {\rm\;and\;} \sum_{i=0}^{2l}|a_i|\le l\}.
\]
This is the set of polynomials that can occur in \eqref{equation:Wl}.
Our aim thus is to understand the set, where an element of $\cP_l$ is small.
This set is easily understood in terms of the roots of the polynomial.
Indeed, by the fundamental theorem of algebra, we can write
\[
P(x)=a_{m}\prod_{i=1}^{m}(x-z_i),
\]
where $a_{m}$ is the highest order non-zero coefficient and $z_1,\ldots,z_{m}$ are the roots of $P(x)$.

We see from this formula, that $P(x)$ is small only in the proximity of the roots.
If the roots are "far from each other", then $P(x)<A^{-l}$ for a suitably large number $A$
will hold only on $l$ very small regions around the roots.
The diameter of the regions is approximately $(A')^{-l}$, where $A'$ is another number depending on
what "far from each other" means.
On the other hand, if there are $k$ roots "very close to each other", then $P(x)<A^{-l}$
may hold on a region of diameter $(A')^{-l/k}$.

Thus we need to show that a typical polynomial does not have many roots near each other, and
we need to estimate the number of exceptions.
We are able to do this only near points which are bounded away from the unit circle.
We choose a parameter $r>0$ and prove that $P$ can have at most $C\log l$ roots
"near" to a point $x\in\C$ satisfying $|x|>1+r$ and if it has "many" such roots, then it has
a "large" coefficient.
($C$ is a number depending on $r$.)
In fact, we prove much more, because we only use that the roots close to $x$ are outside the
circle of radius $1+(1/2)r$ around the origin.
The precise statement will be given later in Lemma \ref{lemma:Jensen}, which is a simple application
of Jensen's formula.

In Section \ref{section:decomposition}
we decompose the annulus $\{x:1+r\le|x|\le r^{-1}\}$ into many small regions, and we consider
those elements of $\cP_l$ which have many roots close to each other within a single region.
As it turns out, the difference of two such polynomials also have many roots within the same region.
Then by  Lemma \ref{lemma:Jensen}, we can conclude that the coefficients of the two polynomials must
be far away in the $l^\infty$ norm.
Finally, in Section \ref{section:l1linf} we estimate the number of $l^\infty$-separated points
in an $l^1$-ball of $\Z^{2l+1}$.
Owing to the definition, the coefficients of a polynomial in $\cP_l$
are bounded in $l^1$ norm, hence we find an estimate on the
number of polynomials which have many roots close to each other in the annulus  $\{x:1+r\le|x|\le r^{-1}\}$.

The details of this argument is given in Section \ref{section:details}, where we prove the
following proposition.

\begin{prp}\label{proposition}
For any numbers $r>0$, $a>1$ there are numbers $A,C>1$ such that the following holds.
For an integer $k\ge1$,
denote by $\cQ_{l,k}\subset\cP_l$ the set of polynomials $P(x)$ for which the
set
\[
\Omega_{P,l}:=\{x:|P(x)|<A^{-l}{\;\rm and\; }1+r\le|x|\le r^{-1}\}
\]
can not be covered by $2l$ disks of radii $2^{-a l/k}$.
Then for every integer $k\ge1$, we have $|\cQ_{l,k}|<C\cdot10^{l/k}$.
Moreover, for $k>\log l$, $\cQ_{l,k}$ contains only the 0 polynomial.
\end{prp}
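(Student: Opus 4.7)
The plan is to follow the four-stage strategy outlined in the paragraphs preceding the proposition: turn the covering obstruction on $\Omega_{P,l}$ into root clustering, quantify that clustering via Jensen's formula, and then count via an $l^\infty$-separation estimate on integer lattice points.

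First, I would translate the covering hypothesis into a root-clustering statement. Writing $P(x) = a_d \prod_{i=1}^{d}(x - z_i)$ with $d \le 2l$, the sub-level set $\Omega_{P,l}$ has at most $d \le 2l$ connected components. If $\Omega_{P,l}$ cannot be covered by $2l$ disks of radius $2^{-al/k}$, then some component $U$ has diameter exceeding $2 \cdot 2^{-al/k}$, and $|P| < A^{-l}$ throughout $U$. Combining this with the factorization and the uniform bound $|x| \le r^{-1}$ on the annulus, and applying Lemma~\ref{lemma:Jensen}, forces $P$ to have at least $k$ roots inside a disk of radius $\rho = 2^{-a'l/k}$ (with $a'$ slightly less than $a$) whose closure lies inside the slightly enlarged annulus $\{x : 1 + r/2 \le |x| \le r^{-1}+1\}$; the small enlargement is where the flexibility between $1 + r$ and $1 + r/2$ noted in the preceding discussion enters.

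Next I would cover the enlarged annulus by a lattice of $N \le C_r \cdot 4^{a'l/k}$ disks of radius $\rho$, assigning each non-zero $P \in \cQ_{l,k}$ to one disk containing its root cluster. For a fixed disk $D$, I would bound the number of polynomials assigned to $D$ by a pairwise argument: if $P, Q \in \cQ_{l,k}$ both have at least $k$ roots in $D$, then the factorization together with $\|P\|_1, \|Q\|_1 \le l$ yields $|P(x)|, |Q(x)| \le C_r^l \rho^k$ on the enlarged disk $2D$, so $|P - Q|$ is small there. Applying Lemma~\ref{lemma:Jensen} to $P - Q$ then forces its integer coefficients to have $l^\infty$-norm at least $c \cdot 2^{a''l/k}$ (unless $P = Q$) for some $a'' > 0$. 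The counting estimate from Section~\ref{section:l1linf}, applied to the $l^1$-ball of radius $2l$ in $\Z^{2l+1}$, then bounds the number of $l^\infty$-separated lattice points by $O(c^{l/k})$, which is the per-disk count.

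Multiplying the $N = O(4^{a'l/k})$ disks by the per-disk count $O(c^{l/k})$ yields $|\cQ_{l,k}| \le C \cdot 10^{l/k}$ once the constants are tuned consistently. For the final assertion, I pair any hypothetical non-zero $P \in \cQ_{l,k}$ against the zero polynomial (which lies in $\cQ_{l,k}$ vacuously, since $\Omega_{0,l}$ is the entire annulus); the third-stage argument applied to $(P, 0)$ yields $\|P\|_\infty \ge c \cdot 2^{a''l/k}$, which exceeds the available bound $\|P\|_\infty \le \|P\|_1 \le l$ whenever $k > \log l$, contradicting $P \in \cP_l$. The main obstacle will be the third stage: extracting from the shared root cluster a quantitative $l^\infty$ lower bound on $P - Q$ whose exponential rate matches the combinatorial factor $10^{l/k}$ and saturates precisely at $k = \log l$; this requires careful coordination of the constants $A$, $a'$, and the annular thickening $r/2$ across all stages.
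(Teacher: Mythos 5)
There are genuine gaps, and they are quantitative rather than cosmetic. The most serious one is your choice of decomposition scale. You cover the annulus by disks of radius $\rho=2^{-a'l/k}$, which forces the covering number $N\asymp C_r4^{a'l/k}$; since the proposition demands the fixed base $10^{l/k}$ for \emph{every} $a>1$ (and the deduction of Theorem \ref{theorem:main} in Section \ref{section:outline} needs $a$ large), any bound of the form $N\cdot(\text{per-disk count})\ge 4^{a'l/k}$ already exceeds $10^{l/k}$ once $a'>\log_410$. No tuning of constants repairs this inside your framework: for your third stage to give anything, you need $\rho^k=2^{-a'l}$ to beat losses of size $C_r^l$ coming from the $2l$ factors $|x-z_i|$ with $z_i$ outside the cluster, which pushes $a'$ up, while the base-$10$ target pushes $a'$ down below $\log_410$. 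The paper resolves exactly this tension by decomposing at the $a$-independent scale $2^{-l/k}$ (so $N\le C4^{l/k}$) and putting all the $a$-dependence into the tunable thresholds $A$ and $B$: Lemma \ref{lemma:decomposition} extracts a point $x_0\in\Omega_{P,l}$ at distance $>2^{-al/k}$ from \emph{all} roots (possible because the $\le 2l$ disks centered at the roots cannot cover $\Omega_{P,l}$) and propagates $|P(x_0)|<A^{-l}$ to $|P|\le B^{-l}$ on the whole cell with $B\approx cA^{1/a}$, using the Mahler-measure bound $|a_m|\prod\max\{1,|z_i|\}\le l$. This also sidesteps your first stage entirely: the paper never proves, and never needs, the claim that $P$ itself has $k$ roots in a $2^{-a'l/k}$-disk, a claim you assert but do not prove (note also that $\Omega_{P,l}$ is the lemniscate intersected with the annulus, so the bound of $2l$ on its number of components is not immediate, and converting a large-diameter component into a localized root cluster requires a genuine argument, e.g.\ a multi-scale or capacity estimate).

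The second gap is your separation claim in stage three. From $|P|,|Q|\le C_r^l\rho^k$ on $2D$ you can only conclude, via the argument of Lemma \ref{lemma:l1linf}, that $P-Q$ has $M$ roots of modulus $>1+r/2$ with $M$ of order $k$ (and only when $a'$ is large compared with $\log C_r$); Lemma \ref{lemma:Jensen} then converts this into a coefficient of size $e^{cM}=e^{O(k)}$, which is the paper's $e^{10k}$ scale, not your claimed $c\cdot 2^{a''l/k}$. Indeed the latter is impossible at that strength: Lemma \ref{lemma:Jensen} would require $M\gtrsim a''l/k$ large-modulus roots, which the available smallness does not provide, and one can see directly that two elements of $\cP_l$ sharing a common factor with a clustered root (e.g.\ $x^{j_1}T(x)$ and $x^{j_2}T(x)$) differ by a polynomial whose coefficients are $O(l)$, far below $2^{a''l/k}$ for small $k$. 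With the corrected separation $e^{O(k)}$ your per-disk count becomes the paper's $e^{l/2k}$-type bound and your ``moreover'' argument survives (up to the constant in $k>\log l$), but the final bound is then $C\,e^{O(l/k)}4^{a'l/k}$, which again fails the stated $10^{l/k}$ for the large values of $a$ that the application requires. In short, the skeleton (decompose, pair polynomials small on a common region, apply Lemma \ref{lemma:Jensen} to the difference, count $l^\infty$-separated points in the $l^1$-ball) matches the paper, but the two quantitative pivots --- the $a$-independent cell size with the propagation Lemma \ref{lemma:decomposition}, and the $e^{10k}$ (not $2^{a''l/k}$) separation --- are exactly what your proposal is missing.
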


It is easy to deduce Theorem \ref{theorem:main} from Proposition \ref{proposition} and
Lemma \ref{lemma:description}.
Indeed, by Lemma \ref{lemma:description}, it follows that if $\g_1,\g_2$ given by
\eqref{equation:generators} is not Diophantine, then
\[
x\in\Omega:=\bigcap_{n=1}^{\infty}\bigcup_{l=n}^\infty\bigcup_{P\in\cP_l}\Omega_{P,l},
\]
provided $r$ is sufficiently small, so that $1+r\le|x|\le r^{-1}$.

If $P\notin\cQ_{k,l}$, then we can cover $\Omega_{P,l}$ by $2l$ disks of radii $2^{-a l/k}$.
Thus we see that the $\a$ dimensional Hausdorff measure
of $\Omega$ is at most
\be\label{equation:Hmeasure}
\lim_{n\to\infty}\sum_{l=n}^{\infty}\left(|\cP_l|\cdot2l\cdot2^{-\a a l}
+\sum_{k=1}^{[\log l]}|\cQ_{l,k}|\cdot 2l\cdot 2^{-\a a l/(k+1)}\right).
\ee

We have 
\[
|\cP_l|\le 2^{2l+1}\cdot\binom{3l}{2l}\le100^l.
\]
Here, the power of 2 stands for the possible choices of the sign and the binomial coefficient
is the number of ways we can write $l$ as the sum of $2l+1$ non-negative integers taking into account
the order of the terms.
In addition, by Proposition \ref{proposition}, we have
\[
|\cQ_{l,k}|\le C\cdot10^{l/k}\le C\cdot100^{l/(k+1)}.
\]

Now \eqref{equation:Hmeasure} is bounded above by
\[
\lim_{n\to\infty}\sum_{l=n}^{\infty}\sum_{k=0}^{[\log l]}2Cl\cdot 100^{l/(k+1)}2^{-\a a l/(k+1)}.
\]
This limit is easily seen converging to 0, if we choose $a$ sufficiently large so that
\[
2^{\a a}>100.
\]
This shows that  the set of numbers $x$ within the annulus
$\{x\in\C:1+r\le|x|\le r^{-1}\}$ such that the pair $\g_1,\g_2$ given by
\eqref{equation:generators} is not Diophantine has Hausdorff dimension at most $\a$.
We can take $r\to0$ and $\a\to 0$.
Therefore Theorem \ref{theorem:main} will be proved once we verified Proposition \ref{proposition}.

\section{Bounding the number of exceptional polynomials}
\label{section:details}

\subsection{}
\label{section:Jensen}
We begin by giving the Lemma, which bounds the number of large roots of a polynomial in terms
of the maximum of the coefficients.
Note that the bound does not depend on the degree of the polynomial.
\begin{lem}\label{lemma:Jensen}
For every number $r>0$, there is a number $C_r$ such that the following is true.
If
\[
P(x)=a_mx^m+\ldots a_1x+a_0=a_m\prod_{i=1}^{m}(x-z_i)
\]
is a polynomial with integer coefficients, then
\[
|\{i:|z_i|>1+r/2\}|\le C_r[\log(\max_{0\le i\le m}|a_i|)+1].
\]
\end{lem}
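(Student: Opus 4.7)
My plan is to use Jensen's formula applied to the reciprocal polynomial of $P$. The key observation is that a root $z_i$ of $P$ with $|z_i|$ large corresponds to a root $1/z_i$ of $Q(y) := y^m P(1/y) = a_m + a_{m-1}y + \cdots + a_0 y^m$ with small modulus, and crucially $Q(0) = a_m$ is a non-zero integer, so $|Q(0)|\geq 1$. (If some $z_i = 0$, I first factor the corresponding power of $x$ out of $P$; this does not affect $\max_i|a_i|$ or the count of large roots.)

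Writing $\rho := 1/(1+r/2) < 1$, I would choose any radius $R$ with $\rho < R < 1$---say $R := (1+\rho)/2$---and apply Jensen's formula to $Q$ on the disk of radius $R$ about the origin. After perturbing $R$ slightly within the same interval if needed so that $Q$ has no roots on the circle $|y|=R$, Jensen gives
\[
\sum_{|w_j|<R}\log\frac{R}{|w_j|} \;=\; \frac{1}{2\pi}\int_0^{2\pi}\log|Q(Re^{i\theta})|\,d\theta \;-\; \log|Q(0)|.
\]

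Each term on the left is non-negative; the $N := |\{i : |z_i|>1+r/2\}|$ small roots I want to count each contribute at least $\log(R/\rho) > 0$. On the right, $\log|Q(0)|\geq 0$ by integrality of $a_m$, and a trivial triangle-inequality bound yields $|Q(Re^{i\theta})| \leq (\max_i|a_i|)\sum_{i=0}^m R^i \leq (\max_i|a_i|)/(1-R)$. Combining these ingredients gives
\[
N\log(R/\rho) \;\leq\; \log\max_i|a_i| + \log\frac{1}{1-R},
\]
which is precisely the claimed bound with a constant $C_r$ depending only on $r$.

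I do not expect a serious obstacle. The entire point of the lemma is to exploit the integrality of the coefficients via the inequality $|a_m|\geq 1$, which is exactly what removes the degree dependence that a naive estimate would introduce (the triangle inequality alone on $|y|=1$ would produce a spurious $\log(m+1)$ term, but working on $|y|=R<1$ sidesteps this). The only minor technicalities are reducing to $Q(0)\neq 0$ by factoring out $x^s$ and avoiding roots on the boundary circle, both of which are routine.
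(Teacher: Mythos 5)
Your proof is correct and is essentially the paper's argument in disguise: the paper applies Jensen's formula to $P$ itself on a circle of radius $\rho=(1+r/2)^{1/2}>1$ and then divides through by $\rho^m$, which is precisely your passage to the reciprocal polynomial $Q(y)=y^mP(1/y)$ evaluated on a circle of radius strictly between $1/(1+r/2)$ and $1$. Both versions rest on the same two points---$|a_m|\ge 1$ by integrality, and a convergent geometric series (ratio $<1$) to eliminate any dependence on the degree---so the two proofs coincide up to the change of variable $x\mapsto 1/x$.
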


\begin{proof}
We apply a version of Jensen's formula on the circle $|x|=\rho=(1+r/2)^{1/2}$.
We obtain
\bean
\log\left[\sum_{i=0}^{m}|a_i|\rho^i\right]
&\ge&\frac{1}{2\pi\rho}\int_{|x|=\rho}\log|P(x)|dx\\
&=&\frac{1}{2\pi\rho}
\int_{|x|=\rho}\left(\log|a_m|+\sum_{i=1}^m\log|x-z_i|\right)dx\\
&=&\log|a_m|+\sum_{i=1}^m\frac{1}{2\pi\rho}\int_{|x|=\rho}\log|x-z_i|dx\\
&=&\log|a_m|+\sum_{i:|z_i|>\rho}\log|z_i|+\sum_{i:|z_i|\le\rho}\log\rho.
\eean
Taking exponentials and dividing both sides by $\rho^m$, we get
\[
\sum_{i=0}^m|a_i|\rho^{i-m}\ge\prod_{i:|z_i|>\rho}\frac{|z_i|}{\rho}\ge\rho^{|\{i:|z_i|>1+r/2\}|}.
\]
(Recall that $\rho^2=1+r/2$.)

Finally, we observe that
\[
\sum_{i=0}^m|a_i|\rho^{i-m}\le \frac{\rho}{\rho-1}\max_{0\le i\le m}|a_i|.
\]
Combining this with the previous inequality and taking logarithms we deduce the claim.
\end{proof}

\subsection*{Note added in proof}
I am grateful to Boris Solomyak, who
pointed out to me that a version of the above Lemma \ref{lemma:Jensen} and a similar proof
can be found in the paper \cite{BBBP-multiple-roots}.
See in particular Theorem 2 and its refinement in Section 4
in that paper.

\subsection{}
\label{section:decomposition}
We fix some real numbers $r>0$, $a>1$, $A$ and $B$ whose value will be determined later.
We write the annulus as a union of small regions:
\be\label{equation:decomposition}
X:=\{x\in\C:1+r\le|x|\le r^{-1}\}=X_1\cup X_2\cup\ldots\cup X_N
\ee
in such a way that the diameter of each $X_i$ is at most $2^{-l/k}$, each of them
contains a disk of radius $c2^{-l/k}$ and the number
of the regions is $N\le C4^{l/k}$.
Here and everywhere below, $C$ and $c$ denote numbers which may depend only on $r$ and $a$
and their values may change from line to line.

We write
\[
\cQ_{l,k,i}:=\{P\in\cP_l:|P(x)|\le B^{-l} {\;\rm for\: all\;} x\in X_i\}.
\]
In the next lemma we exploit the idea that an element of $\cQ_{l,k}$ must have approximately
at least $k$ zeros near each other within the annulus $X$.
Hence it must be fairly small on one of the regions $X_i$.
Note however that the $X_i$ are significantly larger than the disks of radius
$2^{-a l/k}$ that fail to cover the set where the polynomial is below $A^{-l}$.
Thus we have to choose $B$ significantly smaller than $A$ in order that the lemma
holds.

\begin{lem}\label{lemma:decomposition}
Suppose that $B\le c\cdot A^{1/a}$
for some suitably small number $c$ depending on $r$.
Then
\[
\cQ_{l,k}\subset \cQ_{l,k,1}\cup\ldots\cup \cQ_{l,k,N}.
\]
\end{lem}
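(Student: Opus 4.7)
We prove the lemma along the lines suggested informally before its statement: the assumption $P\in\cQ_{l,k}$ forces $P$ to have a cluster of many zeros concentrated in some region of diameter at most $2^{-l/k}$, and such a cluster then forces $|P|$ to be uniformly small on the single region $X_{i_0}$ containing the cluster, yielding $P\in\cQ_{l,k,i_0}$.

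To extract the cluster, I would apply a version of Cartan's lemma to the monic polynomial $P/a_m$ of degree $m\le 2l$: the sub-level set $\Omega_{P,l}$ is covered by at most $m\le 2l$ disks $D_j$, each containing $s_j$ zeros of $P$ and having radius controlled (roughly) by $A^{-l/s_j}$. Since $P\in\cQ_{l,k}$ forbids a cover by $2l$ disks of radii $2^{-al/k}$, at least one disk $D_{j_0}$ must have radius exceeding $2^{-al/k}$; the relation $A^{-l/s_{j_0}}\gtrsim 2^{-al/k}$ then forces $s_{j_0}\gtrsim k\log_2 A/a$. For $A$ chosen large enough depending only on $r$ and $a$, this disk has diameter much smaller than $2^{-l/k}$, so the cluster lies inside some single region $X_{i_0}$, possibly after a bounded enlargement of $X_{i_0}$ absorbed into constants.

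Once the cluster is located, for each $x\in X_{i_0}$ I factor
\[
|P(x)|=|a_m|\prod_{j\in\mathrm{cl}}|x-z_j|\cdot\prod_{j\notin\mathrm{cl}}|x-z_j|,
\]
where $\mathrm{cl}$ denotes the cluster of size $s$ found above. The cluster factors are each bounded by $\mathrm{diam}(X_{i_0})\le 2^{-l/k}$, contributing $2^{-sl/k}$ in total. For the remaining factors I would invoke the Mahler measure: since $P\in\cP_l$ satisfies $\sum_i|a_i|\le l$, we have $M(P)\le\|P\|_1\le l$, hence $\prod_{|z_j|>1}|z_j|\le l/|a_m|\le l$; combined with the bound $|x-z_j|\le(1+r^{-1})\max(1,|z_j|)$ valid for $x$ in the annulus, this yields $\prod_{j\notin\mathrm{cl}}|x-z_j|\le C^l$ with $C=C(r)$. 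Substituting $s\ge k\log_2 A/a$ then produces $|P(x)|\le(C')^l A^{-l/a}$; the inequality $|P(x)|\le B^{-l}$ for every $x\in X_{i_0}$ is equivalent to $B\le A^{1/a}/C'$, which is precisely $B\le cA^{1/a}$ with $c=1/C'$.

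The main obstacle is the first step: extracting a cluster of size $\gtrsim k\log_2 A/a$ (as opposed to merely concluding that many zeros of $P$ exist somewhere in total) requires a sufficiently quantitative form of Cartan's lemma relating each covering disk's radius to the number of zeros it contains. A secondary technical issue is ensuring the cluster fits inside a single region $X_{i_0}$: since the relevant Cartan disk has diameter $\lesssim A^{-l/s}\lesssim 2^{-al/k}$, which is much smaller than the region size $2^{-l/k}$ (as $a>1$), this is handled by a bounded enlargement of $X_{i_0}$, the loss being absorbed into the constant $c$.
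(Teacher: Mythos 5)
Your proposal is not the paper's argument, and it has a genuine gap exactly at the step you yourself flag as the main obstacle. The ``quantitative form of Cartan's lemma'' you need --- a cover of $\{x:|P(x)|<A^{-l}\}$ by at most $2l$ disks in which a disk containing $s_j$ zeros has radius roughly $A^{-l/s_j}$ --- is not the classical Cartan lemma and is not true in the naive form. Classical Cartan's lemma, applied at level $\epsilon=A^{-l}$ to a monic polynomial of degree $m\le 2l$, produces disks whose radii are proportional to (number of zeros inside)$\,\cdot\,\epsilon^{1/m}$, not $\epsilon^{1/s_j}$. Moreover, the per-disk relation with $s_j$ counting only the zeros inside the disk fails already for $P(z)=z(z-\eta)$ at level $\epsilon=\eta^2/100$: the component of the sublevel set around $0$ has diameter comparable to $\sqrt{\epsilon}$, far larger than $C\epsilon$, so the disk must be enlarged to swallow the whole nearby cluster; in general one must also bound from below the product of the off-cluster factors (zeros just outside the chosen disk, or the up to $2l$ zeros in the unit disk, which are only at distance $\ge r$ from the annulus and so contribute a factor as small as $r^{2l}$), which degrades the radius bound to $(cA)^{-l/s_j}$ at best and requires a hierarchical clustering argument you have not supplied. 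Since the lower bound $s_{j_0}\gtrsim k\log_2 A/a$ and the localization of the cluster both rest on this unproved statement, the proof is incomplete as written.

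For comparison, the paper sidesteps cluster extraction entirely. Since $P$ has at most $2l$ roots, $P\in\cQ_{l,k}$ directly yields a point $x_0\in X$ with $|P(x_0)|<A^{-l}$ and $|x_0-z_i|>2^{-al/k}$ for \emph{every} root $z_i$. For $x$ in the region $X_j$ containing $x_0$ one then has $|x-z_i|\le|x_0-z_i|+2^{-l/k}\le|x_0-z_i|+|x_0-z_i|^{1/a}\le 4r^{-1}\max\{1,|z_i|\}\,|x_0-z_i|^{1/a}$, and multiplying over $i$ and using the Mahler-measure bound $|a_m|\prod_i\max\{1,|z_i|\}\le l$ (the same $l^1$-norm/Jensen input you invoke) gives $|P(x)|\le l\,(4r^{-1})^{2l}|P(x_0)|^{1/a}\le B^{-l}$ once $B\le cA^{1/a}$. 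If you wish to rescue your route you must first prove the clustering lemma in a corrected, hierarchical form; the factor-by-factor comparison above shows it can be avoided altogether.
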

\begin{proof}
Let $P\in\cQ_{l,k}$ and let
\[
P(x)=a_{m}\prod_{i=1}^{m}(x-z_i).
\]

By the definition of $\cQ_{l,k}$, the disks of radii $2^{-a l/k}$ around the roots do not cover the set
where $P$ is below $A^{-l}$.
Thus there is a point $x_0\in X$ such that $|P(x_0)|<A^{-l}$ and $|x_0-z_i|>2^{-a l/k}$
for all $1\le i\le m$.

There is a region $X_j$ such that $x_0\in X_j$.
We show that $P\in\cQ_{l,k,j}$, that is $P(x)\le B^{-l}$ for all $x\in X_j$.
For each root $z_i$, we have
\[
|x-z_i|\le|x_0-z_i|+|x_0-x|\le|x_0-z_i|+2^{-l/k}.
\]
Since $|x_0-z_i|>2^{-a l/k}$,
\[
|x-z_i|\le|x_0-z_i|+|x_0-z_i|^{1/a}.
\]

If $|x_0-z_i|\le 1$, then $|x-z_i|\le2|x_0-z_i|^{1/a}$.
In the opposite case
\[
|x-z_i|
\le2|x_0-z_i|
\le2(|x_0|+|z_i|)
\le4r^{-1}\max\{1,|z_i|\}.
\]
Thus, in either case we have
\[
|x-z_i|
\le4r^{-1}\max\{1,|z_i|\}|x_0-z_i|^{1/a}.
\]

Therefore
\[
|P(x)|=|a_{m}|\prod_{i=1}^{m}|x-z_i|\le|a_{m}|\prod_{i=1}^{m}[4r^{-1}\max\{1,|z_i|\}|x_0-z_i|^{1/a}].
\]
By Jensen's formula,
\[
|a_m|\prod_{i=1}^{m}\max\{1,|z_i|\}=\exp\left[\frac{1}{2\pi}\int_{|x|=1}\log|P(x)|\igap dx\right]\le l.
\]
Thus
\[
|P(x)|\le l(4r^{-1})^m|P(x_0)|^{1/a}\le  \left(\frac{r^2}{16l^{1/l}}A^{1/a}\right)^{-l},
\]
since $m\le 2l$.
This gives $|P(x)|\le B^{-l}$ provided
\[
c\le \frac{r^2}{16l^{1/l}}.
\]
This can be bounded by a number depending only on $r$.
\end{proof}

\subsection{}
\label{section:l1linf}

In this section, we give an estimate on the size of $\cQ_{l,k,i}$.
Multiplying this by the number of the regions, we will conclude Proposition \ref{proposition}.
Taking the difference of two elements in $\cQ_{l,k,i}$, we get a polynomial which is
very small on the region $X_i$.
This requires that it has many zeros near this region and by Lemma \ref{lemma:Jensen},
it must have a large coefficient.
This is the content of the next lemma.

\begin{lem}\label{lemma:l1linf}
Suppose that $B$ is sufficiently large depending on $a$ and $r$.
Let $P\neq Q\in\cQ_{l,k,i}$.
Then the polynomial $P-Q$ has a coefficient greater than $e^{10k}$.
\end{lem}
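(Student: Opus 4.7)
I will argue by contradiction. Let $R := P - Q$; this is a non-zero polynomial with integer coefficients of degree at most $2l$, and by the definition of $\cQ_{l,k,i}$ we have $|R(x)| \le 2 B^{-l}$ for every $x \in X_i$. Suppose, toward a contradiction, that every coefficient of $R$ has absolute value at most $e^{10k}$. Lemma \ref{lemma:Jensen} then bounds the number of roots $z_j$ of $R$ satisfying $|z_j| > 1 + r/2$ by $N_0 := C_r(10k + 1)$.

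Next I exploit the disk $D = D(x_c,\rho)$ of radius $\rho = c \cdot 2^{-l/k}$ contained in $X_i$ (so $|x_c| \ge 1 + r$). Introduce the intermediate disk $D^* := D(x_c, r/4) \subset \{|z| > 1 + r/2\}$ and factor
\[
R(x) = Q_1(x) \cdot Q_2(x),
\]
where $Q_1$ is the monic polynomial whose roots are exactly those of $R$ lying in $D^*$ (so $\deg Q_1 \le N_0$ by the previous paragraph), and $Q_2 := a_m \prod_{z_j \notin D^*}(x - z_j)$ absorbs the leading coefficient and the remaining roots. For $x \in D$ and any $z_j \notin D^*$, the triangle inequality gives $|x - z_j| \ge r/4 - \rho \ge r/8$ once $l$ is sufficiently large; combined with $|a_m| \ge 1$ (a nonzero integer) and $\deg Q_2 \le 2l$, this yields $|Q_2(x)| \ge (r/8)^{2l}$, hence on $D$,
\[
|Q_1(x)| \le 2 B^{-l} (8/r)^{2l}.
\]

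On the other hand, since $Q_1$ is monic of degree $\le N_0$ on the disk $D$ of radius $\rho$, the Chebyshev/transfinite-diameter lower bound (or, equivalently, Cauchy's integral formula applied to the leading coefficient of $Q_1$) yields
\[
\max_D |Q_1| \ge \rho^{\deg Q_1} = (c \cdot 2^{-l/k})^{\deg Q_1}.
\]
Comparing the two estimates gives $(c \cdot 2^{-l/k})^{\deg Q_1} \le 2 B^{-l}(8/r)^{2l}$, which rearranges to
\[
B^l \le 2 (8/r)^{2l} c^{-\deg Q_1} 2^{l \deg Q_1 / k}.
\]
Since $\deg Q_1/k \le 11 C_r$, and $\deg Q_1/l = O(k/l) \to 0$ in the regime $k \le \log l$ of Proposition \ref{proposition}, taking $l$-th roots bounds $B$ above by a quantity depending only on $r$ (plus lower order terms vanishing with $l$). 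Choosing $B$ larger than this quantity gives the desired contradiction.

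The conceptually critical step is the introduction of the intermediate disk $D^*$: it separates the roots of $R$ into ``close'' ones (whose count is controlled by Lemma \ref{lemma:Jensen} under the coefficient hypothesis) and ``far'' ones (whose contribution to $|R|$ on $D$ admits a crude but uniform lower bound depending only on $r$ and $l$). The key technical input is the transfinite-diameter lower bound $\max_D |Q_1| \ge \rho^{\deg Q_1}$ for monic polynomials on a disk, a classical fact that follows at once from Cauchy's formula, but without which Jensen's bound on the large roots alone would not be strong enough to close the gap between $\rho^{O(k)}$ and $B^{-l}(8/r)^{2l}$.
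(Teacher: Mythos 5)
Your proof is correct in its core but takes a genuinely different route from the paper's. The paper argues directly: it picks a point $x_0$ in the inscribed disk of $X_i$ at distance at least $c2^{-l/k}/\sqrt M$ from each of the $M$ roots of $P-Q$ of modulus $>1+r/2$ (and at distance $\ge r/2$ from the remaining roots), lower bounds $|P(x_0)-Q(x_0)|$ by $(r/2)^m\bigl[c2^{-l/k}/\sqrt M\bigr]^M$, compares this with $2B^{-l}$ to force $M\ge\frac{\log B-\log C}{\log 2}k$, and only then applies Lemma \ref{lemma:Jensen} to convert many large roots into a large coefficient (using along the way the auxiliary bound $M\le\sqrt l$, valid for $l$ large). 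You instead run the contrapositive: assuming all coefficients of $R=P-Q$ are at most $e^{10k}$, Lemma \ref{lemma:Jensen} caps the number of roots near $X_i$, you split off the monic factor $Q_1$ carrying exactly those roots, crudely lower bound the complementary factor on the inscribed disk, and replace the paper's ``point far from all roots'' pigeonhole by the Cauchy/Chebyshev bound $\max_D|Q_1|\ge\rho^{\deg Q_1}$. This is a clean alternative which avoids both the $\sqrt M$ pigeonhole and the bound $M\le\sqrt l$.

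There is, however, one genuine restriction you impose that neither the statement nor its use permits: your final step closes only ``in the regime $k\le\log l$''. The lemma is stated for all $k\ge1$, and in the proof of Proposition \ref{proposition} it is applied with $Q=0$ for arbitrary $k$ precisely to conclude that $\cQ_{l,k,i}$ is trivial once $e^{10k}>l$; in particular the ``Moreover'' clause concerns exactly $k>\log l$, so restricting to $k\le\log l$ comes close to assuming what the lemma is used to establish there. Concretely, the troublesome factor in your inequality $B^l\le 2(8/r)^{2l}c^{-\deg Q_1}2^{l\deg Q_1/k}$ is $c^{-\deg Q_1}$: with only $\deg Q_1\le C_r(10k+1)$ it is of size $c^{-O(k)}$, which is not exponentially bounded in $l$ when $k\gg l$. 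The fix is immediate: bound $c^{-\deg Q_1}\le c^{-2l}$ using the trivial $\deg Q_1\le\deg(P-Q)\le 2l$ (or use Jensen with the unconditional bound $\max_i|a_i|\le 2l$), while keeping $\deg Q_1\le C_r(10k+1)$ only in the exponent $2^{l\deg Q_1/k}\le 2^{11C_rl}$; this yields $B\le 2^{1/l}(8/r)^2c^{-2}2^{11C_r}$ for every $k\ge1$, giving the contradiction once $B$ is large in terms of $r$ and $a$ alone (note $c$ may depend on $a$ as well as $r$). Likewise, your requirement $\rho\le r/8$ (``once $l$ is sufficiently large'') fails when $k\gtrsim l$, since then $2^{-l/k}$ is not small; it should instead be guaranteed by fixing $c\le r/8$ in the decomposition \eqref{equation:decomposition}. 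With these two adjustments your argument covers all $k$, and in fact no longer needs $l$ large, a point on which it is slightly tidier than the paper's proof.
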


\begin{proof}
Let
\[
P(x)-Q(x)=a_{m}(x-z_1)\cdots(x-z_{m}),
\]
and denote by $M$ the number of roots $z_i$ of modulus larger than $1+r/2$.
Recall that $X_i$ contains a disk of radius $c\cdot 2^{-l/k}$.
This implies that there is a point $x_0\in X_i$ such that $|x_0-z_i|\ge c\cdot 2^{-l/k}/\sqrt{M}$.
Moreover, if $|z_i|\le1+r/2$, then $|x_0-z_i|\ge r/2$.

Thus
\[
2B^{-l}\ge |P(x_0)-Q(x_0)|\ge (r/2)^{m}\left[c\frac{2^{-l/k}}{\sqrt{M}}\right]^M.
\]
By Lemma \ref{lemma:Jensen}, we see that $M\le\sqrt{l}$, otherwise $P-Q$
would have coefficients larger than $e^{\sqrt{l}/C_r-1}$, which is impossible if $l$
is sufficiently large.
Hence
\[
\frac{1}{2}(r/2)^{m}\left[\frac{c}{\sqrt M}\right]^M\ge C^{-l}
\]
for some constant $C$ depending only on $r$ and $a$.
Combining with the previous inequality, we get
${B}/{C}\le 2^{M/k}$, hence
\[
M\ge\frac{\log B-\log C}{\log 2}k.
\]

We combine this bound with Lemma \ref{lemma:Jensen} and conclude that
$P-Q$ must have a coefficient larger than
\[
e^{\frac{\log B-\log C}{C_r\log 2}k-1},
\]
which implies the claim for $B$ sufficiently large.
\end{proof}

\begin{proof}[Proof of Proposition \ref{proposition}]
First choose $B$ sufficiently large so that the conditions of Lemma \ref{lemma:l1linf} hold
and then choose $A$ sufficiently large so that  the conditions of Lemma \ref{lemma:decomposition}
hold, as well.

Applying Lemma \ref{lemma:l1linf} for $Q=0$ and any $0\neq P\in\cQ_{l,k,i}$, it follows that
$P$ must have a coefficient of size larger than $e^{10k}$.
This is only possible if $e^{10k}\le l$ that is $k\le (\log l)/10$.

We give an estimate on the size of $\cQ_{l,k,i}$.
Denote by $\nri{x}$ the nearest integer to the real number $x$.
(When $x$ is half plus an integer, we define $\nri{x}=x-1/2$.)
Write $K=e^{10k}$.
By Lemma \ref{lemma:l1linf}, the map
\[
\Phi:a_{2l}x^{2l}+\ldots+a_1x+a_0\mapsto
(\nri{a_{2l}/K},\ldots,\nri{a_{1}/K},\nri{a_{0}/K})
\]
is injective from $\cQ_{l,k,i}$ to $\Z^{2l+1}$.

For $v=(b_{2l},\ldots,b_0)\in\Z^{2l+1}$ we write
\[
\|v\|_1=|b_{2l}|+\ldots+|b_0|.
\]
Observe that $|\nri{x/K}|\le2|x|/K$ for all real numbers $x$.
Hence, for every $P\in\cQ_{l,k,i}\subset\cP_l$, we have
\[
\|\Phi(P)\|_1\le 2l/K.
\]

The number of ways of writing $[2l/K]$ as the sum of at most $2l+1$ non-negative
integers (taking the order into account) can be estimated using Stirling's formula, as follows:
\bean
\binom{[2l/K]+2l}{2l}
&\le&10\frac{(2l/K+2l)^{2l/K+2l+1/2}}{(2l)^{2l+1/2}(2l/K)^{2l/K+1/2}}\\
&\le&10\left(1+\frac{1}{K}\right)^{2l+1/2}\cdot(1+K)^{2l/K}\\
&\le&10 \exp[(2l+1/2)/K+2l\log(K+1)/K].
\eean
For each such sum, the number of non-zero terms is at most $[2l/K]$, hence
there are at most $2^{2l/K}$ possibilities for
the signs of the coordinates of $\Phi(P)$ for any $P\in\cQ_{l,k,i}$.
This finally gives
\[
|\cQ_{l,k,i}|\le \exp[2l/K+2l/K+2l\log(K+1)/K].
\]

Recall that $K=e^{10k}$.
It is an easy calculation to check that
\[
\frac{2\log(K+1)+4}{K}\le\frac{1}{2k}.
\]
Thus
\[
|\cQ_{l,k,i}|\le e^{l/2k}.
\]

Recall that the number of regions is $N\le C\cdot 4^{l/k}$, hence
by Lemma \ref{lemma:decomposition} we have
\[
|\cQ_{l,k}|\le Ce^{l/2k}4^{l/k}\le C\cdot 10^{l/k},
\]
which was to be proved.
\end{proof}


\bibliography{varju}

\bigskip

\noindent{\sc Centre for Mathematical Sciences,
Wilberforce Road, Cambridge CB3 0WA,
England}\\
{\em e-mail address:} pv270@dpmms.cam.ac.uk
\bigskip

\noindent and

\bigskip
\noindent{\sc The Einstein Institute of Mathematics, Edmond J. Safra Campus,
Givat Ram, The Hebrew University of Jerusalem, Jerusalem, 91904, Israel}
\end{document}